\documentclass[twoside,leqno,10pt, A4]{amsart}
\usepackage{amsfonts}
\usepackage{amsmath}
\usepackage{amscd}
\usepackage{amssymb}
\usepackage{amsthm}
\usepackage{amsrefs}
\usepackage{latexsym}
\usepackage{mathrsfs}
\usepackage{bbm}
\usepackage{amscd}
\usepackage{amssymb}
\usepackage{amsthm}
\usepackage{amsrefs}
\usepackage{latexsym}
\usepackage{mathrsfs}
\usepackage{bbm}
\usepackage{enumerate}
\usepackage{graphicx}
\usepackage{color}
\begin{document}

\newtheorem{theorem}[subsection]{Theorem}
\newtheorem{proposition}[subsection]{Proposition}
\newtheorem{lemma}[subsection]{Lemma}
\newtheorem{corollary}[subsection]{Corollary}
\newtheorem{conjecture}[subsection]{Conjecture}
\newtheorem{prop}[subsection]{Proposition}
\newtheorem{defin}[subsection]{Definition}

\numberwithin{equation}{section}
\newcommand{\mr}{\ensuremath{\mathbb R}}
\newcommand{\mc}{\ensuremath{\mathbb C}}
\newcommand{\dif}{\mathrm{d}}
\newcommand{\intz}{\mathbb{Z}}
\newcommand{\ratq}{\mathbb{Q}}
\newcommand{\natn}{\mathbb{N}}
\newcommand{\comc}{\mathbb{C}}
\newcommand{\rear}{\mathbb{R}}
\newcommand{\prip}{\mathbb{P}}
\newcommand{\uph}{\mathbb{H}}
\newcommand{\fief}{\mathbb{F}}
\newcommand{\majorarc}{\mathfrak{M}}
\newcommand{\minorarc}{\mathfrak{m}}
\newcommand{\sings}{\mathfrak{S}}
\newcommand{\fA}{\ensuremath{\mathfrak A}}
\newcommand{\mn}{\ensuremath{\mathbb N}}
\newcommand{\mq}{\ensuremath{\mathbb Q}}
\newcommand{\half}{\tfrac{1}{2}}
\newcommand{\f}{f\times \chi}
\newcommand{\summ}{\mathop{{\sum}^{\star}}}
\newcommand{\chiq}{\chi \bmod q}
\newcommand{\chidb}{\chi \bmod db}
\newcommand{\chid}{\chi \bmod d}
\newcommand{\sym}{\text{sym}^2}
\newcommand{\hhalf}{\tfrac{1}{2}}
\newcommand{\sumstar}{\sideset{}{^*}\sum}
\newcommand{\sumprime}{\sideset{}{'}\sum}
\newcommand{\sumprimeprime}{\sideset{}{''}\sum}
\newcommand{\sumflat}{\sideset{}{^\flat}\sum}
\newcommand{\shortmod}{\ensuremath{\negthickspace \negthickspace \negthickspace \pmod}}
\newcommand{\V}{V\left(\frac{nm}{q^2}\right)}
\newcommand{\sumi}{\mathop{{\sum}^{\dagger}}}
\newcommand{\mz}{\ensuremath{\mathbb Z}}
\newcommand{\leg}[2]{\left(\tfrac{#1}{#2}\right)}
\newcommand{\muK}{\mu_{\omega}}
\newcommand{\thalf}{\tfrac12}
\newcommand{\lp}{\left(}
\newcommand{\rp}{\right)}
\newcommand{\Lam}{\Lambda_{[i]}}
\newcommand{\lam}{\lambda}
\newcommand{\af}{\mathfrak{a}}
\newcommand{\sw}{S_{[i]}(X,Y;\Phi,\Psi)}
\newcommand{\lz}{\left(}
\newcommand{\pz}{\right)}
\newcommand{\bfrac}[2]{\lz\frac{#1}{#2}\pz}
\newcommand{\odd}{\mathrm{\ primary}}
\newcommand{\even}{\text{ even}}
\newcommand{\res}{\mathrm{Res}}
\newcommand{\sumn}{\sumstar_{(c,1+i)=1}  w\left( \frac {N(c)}X \right)}
\newcommand{\lab}{\left|}
\newcommand{\rab}{\right|}
\newcommand{\Go}{\Gamma_{o}}
\newcommand{\Ge}{\Gamma_{e}}
\newcommand{\M}{\widehat}
\newcommand{\ch}{\mathrm{conv}}
\newcommand{\A}{\mathfrak{a}}

\theoremstyle{plain}
\newtheorem{conj}{Conjecture}
\newtheorem{remark}[subsection]{Remark}

\makeatletter
\def\widebreve{\mathpalette\wide@breve}
\def\wide@breve#1#2{\sbox\z@{$#1#2$}%
     \mathop{\vbox{\m@th\ialign{##\crcr
\kern0.08em\brevefill#1{0.8\wd\z@}\crcr\noalign{\nointerlineskip}%
                    $\hss#1#2\hss$\crcr}}}\limits}
\def\brevefill#1#2{$\m@th\sbox\tw@{$#1($}%
  \hss\resizebox{#2}{\wd\tw@}{\rotatebox[origin=c]{90}{\upshape(}}\hss$}
\makeatletter

\title[Non-vanishing of central values of angular Hecke $L$-functions]{Non-vanishing of central values of angular Hecke $L$-functions}

\author[P. Gao]{Peng Gao}
\address{School of Mathematical Sciences, Beihang University, Beijing 100191, China}
\email{penggao@buaa.edu.cn}

\begin{abstract}
In this paper, we evaluate asymptotically the first and second moment of central values of the family of Hecke $L$-functions associated to angular characters over imaginary quadratic number fields of class number one. We use our results to deduce a non-vanishing result concerning these $L$-functions. 
\end{abstract}

\maketitle

\noindent {\bf Mathematics Subject Classification (2010)}: 11M06, 11M41  \newline

\noindent {\bf Keywords}: angular Hecke $L$-functions, first moment, second moment, non-vanishing, imaginary quadratic number fields

\section{Introduction}\label{sec 1}

 Let  $K=\mq(\sqrt{d})$ be an imaginary quadratic number field of class number one. Here $d$ is a negative, square-free integer and it is well-known that
 (see  \cite[(22.77)]{iwakow}) the class number of $K$ is one if and only if
\begin{align}
\label{dvalue}
 d \in \{-1, -2, -3, -7,-11,-19,-43,-67,-163 \}.
\end{align}
  We denote $\mathcal O_K, U_K, D_K$ the ring of integers, the group of units and the discriminant of $K$, respectively.
 Note that (see \cite[Section 2.1]{Holm25}) the order $|U_K|$ of $U_K$ equals $4$ when $d=-1$, equals $6$ when $d=-3$ and equals $2$ for other values of $d$ listed
 in \eqref{dvalue}.  Throughout the paper,
we denote $M_K$ the integer such that $2M_K=|U_K|$.

  Denote $\A$ an integral ideal of $\mathcal O_K$. 
  As $\mathcal O_K$ is a principal ideal domain, the generators of $\A=(\alpha ) \subseteq \mathcal O_K$ differ
  by multiplication of an element in  $U_K$. In what follows we shall therefore fix a generator $\alpha$ of $(\alpha )$  such that
  $\theta_{\A} := \theta_{\alpha}$ lies in $[-\pi /|U_K|, \pi/|U_K|)$,  where $\theta_{\alpha}$ denotes the argument of $\alpha$.

  For any $l \in \mz$, we define the Hecke character
\begin{equation*}
\lambda_{M_Kl}(\A)= \left(\frac {\alpha}{ \overline{\alpha}}\right)^{M_Kl} = e^{2i M_Kl \theta_{\A}}.
\end{equation*}
  Note that as $\lambda_{M_Kl}$ is trivial on $U_K$, it defines a Hecke character of trivial infinite type. We write the associated Hecke $L$-function as
\begin{align*}
L(s, \lambda_{M_Kl}) &= \ \sum_{\substack{\A \subseteq \mathcal O_K\\ \A \neq 0}}\frac{\lambda_{M_Kl}(\A)}{N(\A)^{s}}=\prod_{(\varpi) }\left(1-\frac{\lambda_{M_Kl}(\varpi)}{N(\varpi)^s}\right)^{-1}, \hspace{5mm}
\Re(s)>1,
\end{align*}
  where $N(\A)$ denotes the norm of the integral ideal $\A$, and we reserve the letter $\varpi$ for a prime in $\mathcal O_K$ throughout the paper,
  by which we mean that the ideal $(\varpi)$ generated by it is a prime ideal.

  Let $X>1$ be a large real number, interesting properties concerning
   the family of Hecke $L$-functions $\{ L(s, \lambda_{M_Kl}) : 1 \leq l \leq X \}$ have been investigated in \cite{Waxman21}, \cite{Holm25} for its one-level density,
   in \cite{white22} for its extreme values.
   
   Under the generalized Riemann hypothesis, the one-level density results established in \cite{Waxman21}, \cite{Holm25} (see \cite[Corollary 1.3]{Waxman21}) imply that at least $75\%$ of the members of the family of angular Hecke $L$-functions do not vanish at the central point.
   
    It is the aim of this paper to establish a non-vanishing result unconditionally for these families of $L$-functions by evaluating their first and second moments at the central point. 

Our first result evaluates asymptotically the first moment as follows.
\begin{theorem}
\label{1stmoment}
 With the notation as above. Let $K$ be any imaginary quadratic number field of class number  one. Let $M_K$ be the integer such that $2M_K=|U_K|$, we have for large $X>1$,
\begin{align}
\label{1stmomentasymp}
 \sum_{X < l \leq 2X}L(\frac 12, \lambda_{M_Kl})=X\log X+(\log (4M_K))X+O(X^{1/2}\log X).
\end{align}
\end{theorem}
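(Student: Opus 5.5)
The plan is to apply an approximate functional equation to each $L(\tfrac12,\lambda_{M_Kl})$, average over $X<l\le 2X$, and separate a diagonal main term from an off-diagonal error.

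\emph{Approximate functional equation.} For $l\neq 0$ the character $\lambda_{M_Kl}$ has frequency $2M_Kl$ at infinity. Its completed $L$-function is
\[
\Lambda(s,\lambda_{M_Kl})=\left(|D_K|\right)^{s/2}(2\pi)^{-s}\Gamma(s+M_K|l|)L(s,\lambda_{M_Kl}).
\]
It satisfies $\Lambda(s,\lambda_{M_Kl})=W\,\Lambda(1-s,\overline{\lambda}_{M_Kl})$ with $|W|=1$. Moreover $\overline{\lambda}_{M_Kl}=\lambda_{-M_Kl}$, and $L(s,\lambda_{-M_Kl})=L(s,\lambda_{M_Kl})$ because complex conjugation permutes the ideals. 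I will check from the standard theta-series computation that $W=1$. This gives
\[
L(\tfrac12,\lambda_{M_Kl})=2\sum_{\A\neq 0}\frac{\lambda_{M_Kl}(\A)}{N(\A)^{1/2}}V_l\Big(\frac{2\pi N(\A)}{\sqrt{|D_K|}}\Big),
\]
where
\[
V_l(x)=\frac1{2\pi i}\int_{(2)}\frac{\Gamma(\tfrac12+s+M_Kl)}{\Gamma(\tfrac12+M_Kl)}x^{-s}e^{s^2}\frac{ds}{s}.
\]
By Stirling, $\Gamma(\tfrac12+s+M_Kl)/\Gamma(\tfrac12+M_Kl)=(M_Kl)^s(1+O(|s|^2/l))$. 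So the sum has effective length $N(\A)\ll l^{1+\varepsilon}$, and up to acceptable errors the $l$-dependence of the weight is just $(M_Kl)^s$.

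\emph{Diagonal.} Summing over $l$, the term $\lambda_{M_Kl}(\A)=e^{2iM_Kl\theta_\A}$ does not oscillate in $l$ exactly when $\theta_\A=0$. Since $\theta_\A\in[-\pi/|U_K|,\pi/|U_K|)$, this means $\A=(n)$ with $n\in\mn$ and $N(\A)=n^2$. The edge value $\theta_\A=-\pi/(2M_K)$ gives the alternating factor $(-1)^l$, so it belongs to the off-diagonal. The diagonal therefore equals
\[
2\sum_{X<l\le 2X}\frac1{2\pi i}\int_{(2)}\frac{\Gamma(\tfrac12+s+M_Kl)}{\Gamma(\tfrac12+M_Kl)}\Big(\frac{\sqrt{|D_K|}}{2\pi}\Big)^{s}\zeta(1+2s)e^{s^2}\frac{ds}{s}.
\]
I shift the contour to $\Re s=-\tfrac12+\varepsilon$, which costs $O(l^{-1/2+\varepsilon})$ per $l$. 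The double pole at $s=0$ has residue
\[
\log(M_Kl)+\log\frac{\sqrt{|D_K|}}{2\pi}+2\gamma+O(1/l).
\]
Summing over $l$ produces $X\log X$ plus a linear term.

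The claimed constant $\log(4M_K)$ is independent of $D_K$. So I expect the $D_K$-dependent constants ($\log\sqrt{|D_K|}$, $\gamma$, $2\pi$) to cancel against a second main contribution. The candidate is the edge family $\theta_\A=-\pi/(2M_K)$. Those $\A$ are generated by $n\,\eta$ with $\eta$ a fixed algebraic integer of that argument (for instance $\eta=\sqrt d$ when $M_K=1$), so they also produce a zeta sum, weighted by $(-1)^l$ and with norm scaled by $N(\eta)$. I would then get the total main term by carefully bookkeeping the $\zeta$-residues of both families and the Stirling terms. This bookkeeping must reproduce exactly $X\log X+(\log 4M_K)X$, and I will treat it as a check.

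\emph{Off-diagonal (the main obstacle).} For the remaining $\A$ we have $0<|\theta_\A|<\pi/(2M_K)$. Writing $\A=(a+b\omega)$ with $b\neq0$ gives $|\theta_\A|\gg |b|/\sqrt{N(\A)}\gg N(\A)^{-1/2}$. I interchange the sums and write the weight via its Mellin integral as $(M_Kl)^s$ up to error. Partial summation in $l$ then bounds the inner sum by
\[
\Big|\sum_{X<l\le2X}l^{s}e^{2iM_Kl\theta_\A}\Big|\ll X^{\Re s}\,(1+|s|)\,\|M_K\theta_\A/\pi\|^{-1}.
\]
A naive estimate gives
\[
\sum_{N(\A)\ll X^{1+\varepsilon}}N(\A)^{-1/2}\cdot N(\A)^{1/2}\approx X^{1+\varepsilon},
\]
which is too weak. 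Two refinements should bring it down to $X^{1/2}\log X$. First, keep $\Re s$ near $0$ on the contour, taking $\Re s=\varepsilon$ rather than $2$, and exploit that only $\A$ with small $|b|$ have small angle. Second, count lattice points: $\sum_{N(\A)\le Y}\|\theta_\A\|^{-1}N(\A)^{-1/2}$ becomes a sum over $(a,b)$ of $\sqrt{N}/|b|\cdot N^{-1/2}$, i.e. about $\sum_{a\le\sqrt Y}\sum_{b\le a}1/b\ll \sqrt Y\log Y$. Taking $Y\asymp X$ gives $O(X^{1/2}\log X)$. The Stirling errors $O(|s|^2/l)$ contribute $O(\sum_l l^{-1/2})=O(X^{1/2})$. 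The delicate points are making this lattice-point count uniform in the Mellin variable, and treating the $\theta$ near $\pm\pi/(2M_K)$ through $\|\cdot\|$. I expect the uniformity to be handled by choosing $\Re s=1/\log X$ and using the rapid decay of $e^{s^2}$.
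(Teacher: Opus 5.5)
Your skeleton is the same as the paper's. You use the approximate functional equation \eqref{afe}, extract the diagonal $\mathfrak a=(n)$, and shift the contour past the double pole of $\zeta(1+2s)/s$. For all other ideals you use the bound $\bigl|\sum_{l}\lambda_{M_Kl}(\mathfrak a)\bigr|\ll 1/|\sin(M_K\theta_{\mathfrak a})|\ll \sqrt{N(\mathfrak a)}/|b|$, partial summation in $l$, and the lattice count $\sum_{N(\mathfrak a)\le u,\ b\neq 0}1/|b|\ll u^{1/2}\log u$. Your residue $\log(M_Kl)+\log\frac{\sqrt{|D_K|}}{2\pi}+2\gamma+O(1/l)$, with $\gamma$ Euler's constant, is correct. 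One minor point: stopping the diagonal contour at $\Re s=-\frac12+\varepsilon$ leaves $O(X^{1/2+\varepsilon})$ in total, which exceeds the claimed error. Shift to $\Re s=-1$ instead, where the Gamma ratio is $O(1/l)$, as the paper does.

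The genuine gap is the mechanism you propose for reaching the constant $\log(4M_K)$. The edge family $\theta_{\mathfrak a}=-\pi/(2M_K)$ cannot supply a second main term. For it $\lambda_{M_Kl}(\mathfrak a)=(-1)^l$, i.e. $|\sin(M_K\theta_{\mathfrak a})|=1$, so your own off-diagonal bound shows the whole family contributes $\ll\sum_{m\ll X^{1/2+\varepsilon}}1/m\ll\log X$. Moreover, no ideal other than $(n)$ has $\lambda_{M_Kl}(\mathfrak a)$ independent of $l$. So nothing of size $X$ is available to cancel $\log\frac{\sqrt{|D_K|}}{2\pi}+2\gamma$. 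Since $\sum_{X<l\le 2X}\log l=X\log X+(2\log 2-1)X+O(\log X)$, the argument carried out correctly yields
\[
\sum_{X<l\le 2X}L(\tfrac12,\lambda_{M_Kl})=X\log X+\Bigl(\log(4M_K)-1+\log\frac{\sqrt{|D_K|}}{2\pi}+2\gamma\Bigr)X+O(X^{1/2}\log X).
\]
The extra constant would vanish only if $|D_K|=4\pi^2e^{2-4\gamma}\approx 28.99$, so it is nonzero for every field in \eqref{dvalue}. Your planned ``check'' must therefore fail, because the linear coefficient in the statement is wrong.

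The paper arrives at $\log(4M_K)$ through two slips. In \eqref{Sest} the per-$l$ constant in the residue is recorded as $O(1)$ for the entire sum rather than $O(X)$. In \eqref{Mval} the $-X$ coming from $\int_X^{2X}\log t\,dt$ is dropped. What your method (and the paper's) actually proves is the displayed formula. Its leading term $X\log X$ is all the non-vanishing application needs from the first moment.
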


   Our next result evaluates asymptotically the second moment.
\begin{theorem}
\label{2ndmoment}
 With the notation as above. Let $K$ be any imaginary quadratic number field of class number one. Let $M_K$ be the integer such that $2M_K=|U_K|$, we have for large $X>1$,
\begin{align}
\label{2stmomentasymp}
 \sum_{X < l \leq 2X}\Big |L(\frac 12, \lambda_{M_Kl})\Big |^2=C_KX(\log X)^3+O(X(\log X)^2),
\end{align}
 where $C_K$ is a constant depending on $K$ only.
\end{theorem}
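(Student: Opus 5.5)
We will prove Theorem \ref{2ndmoment} by an approximate functional equation followed by orthogonality in $l$. Since $\lambda_{M_Kl}$ has trivial infinite-type conductor, $L(s,\lambda_{M_Kl})$ is (up to a power of $|D_K|$) an $L$-function with gamma factor $\Gamma_{\mc}(s+M_K|l|)$. Its analytic conductor is therefore $\asymp l^2$. We have $|L(\frac12,\lambda_{M_Kl})|^2=L(\frac12,\lambda_{M_Kl})L(\frac12,\lambda_{-M_Kl})$, and the product has conductor $\asymp l^4$. The approximate functional equation for this product then gives
\begin{align*}
\Big|L(\tfrac12,\lambda_{M_Kl})\Big|^2 = 2\sum_{\A,\mathfrak b}\frac{\lambda_{M_Kl}(\A)\overline{\lambda_{M_Kl}(\mathfrak b)}}{N(\A\mathfrak b)^{1/2}}\, W_l\Big(\frac{N(\A\mathfrak b)}{l^2}\Big),
\end{align*}
where $W_l$ is an explicit smooth weight. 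Using Stirling, $W_l(x)$ is essentially independent of $l$ for $l\asymp X$, up to an error $O(1/l)$. The sum is thus effectively of length $N(\A)N(\mathfrak b)\ll X^{2+\epsilon}$.

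Next we insert this into $\sum_{X<l\le 2X}$ and write $\lambda_{M_Kl}(\A)\overline{\lambda_{M_Kl}(\mathfrak b)}=e^{2iM_Kl(\theta_\A-\theta_{\mathfrak b})}$. Summing over $l$ (with a smooth weight, removed at the end at the cost of $O(X(\log X)^2)$) produces a function of $\theta_\A-\theta_{\mathfrak b}$. This function has size about $X$ when $\|M_K(\theta_\A-\theta_{\mathfrak b})/\pi\|\lesssim 1/X$ and decays rapidly otherwise. The argument splits into two ranges.

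\emph{Diagonal:} $\theta_\A=\theta_{\mathfrak b}$. With the normalisation $\theta\in[-\pi/|U_K|,\pi/|U_K|)$ this means $\alpha/\beta\in\mathbb{Q}_{>0}$. Writing $\alpha=g\alpha_0$, $\beta=h\alpha_0$ with $\alpha_0$ primitive (not divisible by any rational integer $>1$) and $(g,h)=1$, the diagonal becomes
\begin{align*}
X\sum_{\alpha_0\ \mathrm{primitive}}\frac{1}{N(\alpha_0)}\sum_{(g,h)=1}\frac{1}{gh}\,W\Big(\frac{g^2h^2N(\alpha_0)^2}{X^2}\Big).
\end{align*}
The associated Dirichlet series is essentially $\zeta_K(1+2s)\zeta(1+2s)^2/\zeta(2+4s)$, up to Euler factors at primes dividing rational integers. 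It has a triple pole at $s=0$, so a contour shift yields $C_KX(\log X)^3+O(X(\log X)^2)$ with $C_K$ explicit.

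\emph{Off-diagonal:} $\theta_\A\ne\theta_{\mathfrak b}$. This is the main obstacle. For $\alpha=a_1+a_2\omega$ and $\beta=b_1+b_2\omega$ we have $\sin(\theta_\alpha-\theta_\beta)\asymp |\Im(\alpha\bar\beta)|/|\alpha\beta|$, and $\Im(\alpha\bar\beta)$ is a nonzero element of a fixed lattice, so it is at least $c_K>0$ in absolute value. Hence near-coincidences $|\theta_\A-\theta_{\mathfrak b}|\lesssim 1/X$ force $|\Im(\alpha\bar\beta)|\lesssim |\alpha\beta|/X$.

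We plan to estimate this by counting lattice points. The quantity $\Im(\alpha\bar\beta)=a_2b_1-a_1b_2$ (times a constant) is a determinant. For dyadic $N(\A)\sim A$, $N(\mathfrak b)\sim B$ with $AB\le X^{2+\epsilon}$, we count pairs with $0<|\det|\le \sqrt{AB}/X$. Fixing $\alpha$ and the value of $\det=k$, the number of $\beta$ is $\ll 1+\sqrt{B/A}$. The total weight is then roughly
\begin{align*}
\frac{X}{\sqrt{AB}}\cdot A\cdot\frac{\sqrt{AB}}{X}\Big(1+\sqrt{B/A}\Big),
\end{align*}
which after summing dyadically gives $O(X(\log X)^2)$ once the decay of the Fejér-type kernel is used for the larger separations.

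Two points need care in this step. First, the approximate functional equation error coming from the $l$-dependence of $W_l$ must be controlled. Second, the dyadic count must be arranged to save a full $\log$ rather than only $X^{\epsilon}$. If the crude count loses a logarithm, we would instead treat the off-diagonal with Poisson summation in $l$, pairing to a shifted convolution sum over $k=\Im(\alpha\bar\beta)$ and summing over $k$ first.
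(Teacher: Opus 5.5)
Your route is sound and differs from the paper's mainly in how the off-diagonal is controlled. The paper uses that $L(\frac12,\lambda_{M_Kl})$ is real to write $|L|^2=L^2$ with coefficients $\tau_K(\mathfrak n)\lambda_{M_Kl}(\mathfrak n)$. Its diagonal $\mathfrak n=(n)$ corresponds to your $\theta_{\A}=\theta_{\mathfrak b}$ under $\mathfrak b\mapsto\overline{\mathfrak b}$ and gives the same series $\zeta^3(1+2s)L(1+2s,\chi_d)F_K(s)$. For the off-diagonal it uses $\sum_l\lambda_{M_Kl}(\mathfrak n)\ll\sqrt{N(\mathfrak n)}/|b|$, which reduces matters to $\sum_{N(\mathfrak n)\le X^2}\tau_K(\mathfrak n)/|b|$. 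That sum is bounded via McKee's asymptotic for $\sum_a\tau(f_K(a;b))$ together with class-number bounds for discriminants $\asymp b^2$. Opening $\tau_K$, this is exactly your sum of $1/|\det|$ over pairs. So your lattice-line count (fix one element and $k$; the other lies on a translate of $\mz\alpha_0$) is an elementary, uniform replacement for that arithmetic input, and it sidesteps the question of uniformity in $b$.

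You also need neither smoothing nor the Poisson fallback. Fix the element of smaller norm; otherwise the $1$ in $1+\sqrt{B/A}$ contributes $A$, which is not summable when $A\gg B$. Then there are $\ll K\sqrt{AB}$ pairs with $0<|k|\le K$. Use the sharp bound $\min(X,\sqrt{AB}/|k|)$ for the $l$-sum, handling the $l$-dependence of $W_l$ by partial summation as in the paper. Each dyadic $k$-block then contributes $\ll\sqrt{AB}$, so each dyadic box contributes $\ll\sqrt{AB}\log X$. Summing over $A,B$ against the decay of $W$ past $AB\approx X^2$ gives $O(X(\log X)^2)$. Use the decay rather than a cutoff at $X^{2+\varepsilon}$, which would lose $X^{\varepsilon}$.

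Two small corrections. First, for $M_K>1$ the $l$-sum is also large when $M_K(\theta_{\A}-\theta_{\mathfrak b})$ is near $\pm\pi$, i.e.\ when $\Im(\alpha\overline{\varepsilon\beta})$ is small for some unit $\varepsilon$, so run the count for each $\varepsilon\in U_K$. Second, the parametrisation ``$\alpha_0$ primitive, $(g,h)=1$'' misses pairs such as $(p,p)$. Take $(g,h)=1$ with the common factor arbitrary; that is what your series $\zeta_K(1+2s)\zeta(1+2s)^2/\zeta(2+4s)$ actually encodes.
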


   It follows from Theorem \ref{1stmoment}-\ref{2ndmoment} and the standard arguments using ‌Cauchy-Schwarz inequality that we have the following non-vanishing result concerning the family of angular Hecke $L$-functions.
\begin{theorem}
\label{coro:nonvanish}
With the notation as above.  Let $K$ be any imaginary quadratic number field of class number  one and let $M_K$ be the integer such that $2M_K=|U_K|$. There exist infinitely many  angular Hecke  characters $\lambda_{M_Kl}$ such that $L(1/2, \lambda_{M_Kl})$ is non-zero.  More precisely, the number of such characters with $X< l \leq 2X$ is $\gg X/\log X$ for any large $X$.
\end{theorem}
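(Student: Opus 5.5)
The plan is to deduce Theorem \ref{coro:nonvanish} from Theorems \ref{1stmoment} and \ref{2ndmoment} through the Cauchy--Schwarz inequality. Let $\mathcal{N}(X)$ denote the number of integers $l$ with $X<l\leq 2X$ and $L(\frac12,\lambda_{M_Kl})\neq 0$. Only these $l$ contribute to the first moment, so Cauchy--Schwarz gives
\begin{align*}
\Big|\sum_{X<l\leq 2X}L(\tfrac12,\lambda_{M_Kl})\Big|^2 \leq \mathcal{N}(X)\sum_{X<l\leq 2X}\Big|L(\tfrac12,\lambda_{M_Kl})\Big|^2 .
\end{align*}

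For the left side, \eqref{1stmomentasymp} says the sum equals $X\log X+O(X)$. Since the error term $O(X^{1/2}\log X)$ is much smaller than the main term, the absolute value of the first moment is $\geq \frac12 X\log X$ for large $X$. Its square is therefore $\gg X^2(\log X)^2$.

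For the right side, \eqref{2stmomentasymp} bounds the second moment by $C_KX(\log X)^3+O(X(\log X)^2)\ll X(\log X)^3$. Here we only need an upper bound, so the exact value of $C_K$ does not matter. We do need $C_K$ to be finite and the bound to hold uniformly, and Theorem \ref{2ndmoment} supplies both; positivity of $C_K$ plays no role in this argument.

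Combining the two estimates gives
\begin{align*}
\mathcal{N}(X)\gg \frac{X^2(\log X)^2}{X(\log X)^3}=\frac{X}{\log X}.
\end{align*}
This tends to infinity as $X\to\infty$, so infinitely many $l$ give $L(\frac12,\lambda_{M_Kl})\neq 0$.

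The step that needs care is that the characters $\lambda_{M_Kl}$ for distinct $l$ must be distinct, so that counting $l$ really counts characters. The family includes $l\geq 1$ only, and $\lambda_{M_Kl}$ is determined by $l$: its value at $\varpi$ is $e^{2iM_Kl\theta_\varpi}$, and the angles $\theta_\varpi$ are not all rational multiples of $\pi$ with bounded denominator. Hence the characters are pairwise distinct, and the lower bound for $\mathcal{N}(X)$ translates directly into $\gg X/\log X$ distinct characters with non-vanishing central value. No other obstacle arises once the two moment theorems are granted.
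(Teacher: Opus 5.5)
Your argument is correct and is the route the paper takes: the paper only says the result follows from Theorems \ref{1stmoment} and \ref{2ndmoment} by the standard Cauchy--Schwarz argument, and you carry that out using only that the first moment is $\gg X\log X$ and the second is $\ll X(\log X)^3$. Your extra point that distinct $l$ give distinct characters, which the paper does not address, is also right, and it can be made precise as follows: for a split prime $\varpi$, the number $\varpi/\overline{\varpi}$ is not an algebraic integer (otherwise it would be a unit and $(\varpi)=(\overline{\varpi})$), hence is not a root of unity, so $\lambda_{M_Kl}(\varpi)\neq\lambda_{M_Kl'}(\varpi)$ whenever $l\neq l'$.
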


\section{Preliminaries}
\label{sec 2}

\subsection{Angular Hecke characters}

  It is shown in \cite[Section 2]{Waxman21} that for $l \neq 0$, the complete Hecke $L$-function
\begin{equation}
\label{LambdaL}
\Lambda_{l}(s) = |D_K|^{s/2}(2\pi)^{-s}\Gamma(s+|M_Kl|)L(s, \lambda_{M_Kl})
\end{equation}
  is entire and have an analytic continuation to the whole complex plane that satisfies the functional equation
\begin{equation}
\label{fcneqn}
 \Lambda_{l}(s) = \Lambda_{l}(1-s).
\end{equation}

  The above allows us to establish the following approximate functional equation for $L(\frac 12, \lambda_{M_Kl})$.
\begin{lemma}
\label{approximate functional equation}
 With the notation as above. We have for any $l \neq  0$,
\begin{align}
\label{afe}
L(\frac 12, \lambda_{M_Kl})
= 2\sum_{\substack{\mathfrak a\subseteq\mathcal O_K\\ \mathfrak a\ne(0)}}
\frac{\lambda_{M_Kl}(\mathfrak a)}{N(\mathfrak a)^{1/2}}
V_{|M_Kl|}\left(\frac {2\pi N(\mathfrak a)}{\sqrt{|D_K|}}\right),
\end{align}
 where
\begin{align}
\label{Vk}
V_{x}(y)
=
\frac{1}{2\pi i}
\int_{(3)}
\frac{\Gamma(\frac{1}{2}+x+s)}{\Gamma(\frac{1}{2}+x)}
y^{-s}e^{s^2}\frac{ds}{s}.
\end{align}
\end{lemma}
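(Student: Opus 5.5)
We follow the standard derivation of an approximate functional equation from the completed $L$-function \eqref{LambdaL} and the functional equation \eqref{fcneqn}. Put $k=|M_Kl|\ge 1$ and consider
\begin{align*}
I=\frac{1}{2\pi i}\int_{(3)}\Lambda_l(\tfrac12+s)\,e^{s^2}\frac{ds}{s}.
\end{align*}
Since $\Lambda_l$ is entire and $e^{s^2}$ decays rapidly in vertical strips, we may shift the contour to $\Re(s)=-3$. Here we use the convexity (Phragm\'en--Lindel\"of) bound for $L(s,\lambda_{M_Kl})$ together with Stirling's formula for $\Gamma(s+k)$; these give at most exponential growth of order $e^{\pi|t|/2}$, which is dominated by $|e^{s^2}|=e^{\sigma^2-t^2}$. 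In moving the contour we pass only the simple pole at $s=0$, whose residue is $\Lambda_l(\tfrac12)$. Hence
\begin{align*}
\Lambda_l(\tfrac12)=I-\frac{1}{2\pi i}\int_{(-3)}\Lambda_l(\tfrac12+s)\,e^{s^2}\frac{ds}{s}.
\end{align*}

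In the second integral we substitute $s\mapsto -s$ and apply \eqref{fcneqn} in the form $\Lambda_l(\tfrac12-s)=\Lambda_l(\tfrac12+s)$. Because $e^{s^2}$ is even and $ds/s$ is invariant under $s\mapsto -s$, while the orientation of the line reverses, the second integral equals $-I$. Therefore
\begin{align*}
\Lambda_l(\tfrac12)=2I.
\end{align*}

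It remains to unfold $I$. On $\Re(s)=3$ the Dirichlet series converges absolutely, so we may write
\begin{align*}
\Lambda_l(\tfrac12+s)=|D_K|^{(1/2+s)/2}(2\pi)^{-1/2-s}\,\Gamma(\tfrac12+k+s)\sum_{\A}\lambda_{M_Kl}(\A)N(\A)^{-1/2-s}
\end{align*}
and interchange summation and integration. Dividing by $\Lambda_l(\tfrac12)/L(\tfrac12,\lambda_{M_Kl})=|D_K|^{1/4}(2\pi)^{-1/2}\Gamma(\tfrac12+k)$ turns each term into
\begin{align*}
\frac{\lambda_{M_Kl}(\A)}{N(\A)^{1/2}}\cdot\frac{1}{2\pi i}\int_{(3)}\frac{\Gamma(\tfrac12+k+s)}{\Gamma(\tfrac12+k)}\left(\frac{2\pi N(\A)}{\sqrt{|D_K|}}\right)^{-s}e^{s^2}\frac{ds}{s}.
\end{align*}
The inner integral is exactly $V_k\bigl(2\pi N(\A)/\sqrt{|D_K|}\bigr)$ as defined in \eqref{Vk}. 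Summing over $\A$ and using $\Lambda_l(\tfrac12)=2I$ gives \eqref{afe}.

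The only delicate point is justifying the contour shift. This requires the growth bound for $\Lambda_l$ in the strip $-3\le\Re(s)\le 3$, which follows from entirety of $\Lambda_l$, the functional equation, and Phragm\'en--Lindel\"of, against the Gaussian decay of $e^{s^2}$. Everything else is routine.
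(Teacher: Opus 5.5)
Your proof is correct and is essentially the paper's argument: the paper cites Iwaniec--Kowalski Theorem 5.3, whose proof is the same contour shift from $\Re(s)=3$ to $\Re(s)=-3$, using the functional equation to reflect the second integral, that you carry out. The only difference is that the cited theorem, in its general form, produces a dual sum with $\overline{\lambda_{M_Kl}}$, which the paper then merges with the first sum using $\overline{A_{M_Kl}(n)}=A_{M_Kl}(n)$; you apply the self-dual form \eqref{fcneqn} directly, as the paper itself does for Lemma \ref{approximate functional equation-square}, so that merging step is not needed.
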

\begin{proof}
  We apply \cite[Theorem~5.3]{iwakow} by setting $G(s)=e^{s^2}$ there and use \eqref{LambdaL} and \eqref{fcneqn} to see that we have
\[
L(\frac 12, \lambda_{M_Kl})
= \sum_{\substack{\mathfrak a\subseteq\mathcal O_K\\ \mathfrak a\ne(0)}}
\frac{\lambda_{M_Kl}(\mathfrak a)+\overline{\lambda_{M_Kl}(\mathfrak a)}}{N(\mathfrak a)^{1/2}}
V_{|M_Kl|}\left(N(\mathfrak a)\right),
\]
  where $V_{|M_Kl|}$ is given in \eqref{Vk}. We now set $N(\mathfrak a)=n$ to see that we may recast the above as
\[
L(\frac 12, \lambda_{M_Kl})
= \sum_{\substack{n \geq 1}}
\frac{A_{M_Kl}(n)+\overline{A_{M_Kl}(n)}}{n^{1/2}}
V_{|M_Kl|}\left(n\right),
\]
  where
\begin{align*}
A_{M_Kl}(n) &= \sum_{\substack{N(\mathfrak a) = n}}\lambda_{M_Kl}(\mathfrak a).
\end{align*}
  Note that it is shown in \cite[p. 251]{Holm25} that we have $\overline{A_{M_Kl}(n)}=A_{M_Kl}(n)$ for all $A_{M_Kl}(n)$. It follows that
\begin{align}
\label{Lexp}
L(\frac 12, \lambda_{M_Kl})
= 2\sum_{\substack{n \geq 1}}
\frac{A_{M_Kl}(n)}{n^{1/2}}
V_{|M_Kl|}\left(n\right).
\end{align}
 We reverse the above process to see that the relation given in \eqref{afe} is valid. This completes the proof of the lemma.
\end{proof}

  We note that similar to \cite[Lemma 2.1]{sound1}, one shows that the function $V_{|M_Kl|}\left(x \right)$ is real-valued for any real $x>0$. We deduce
from this and \eqref{Lexp} by recalling that  $\overline{A_{M_Kl}(n)}=A_{M_Kl}(n)$ there to see that we have $L(\frac 12, \lambda_{M_Kl}) \in \mr$. It follows from this
that we have
\begin{align}
\label{Lsquare}
|L(\frac 12, \lambda_{M_Kl})|^2=L(\frac 12, \lambda_{M_Kl})^2.
\end{align}

  Our next result establishes the approximate functional equation for $|L(\frac 12, \lambda_{M_Kl})|^2$.
\begin{lemma}
\label{approximate functional equation-square}
 With the notation as above. We have for any $l \neq  0$,
\begin{align}
\label{afesquare}
	\Big|L\left(\frac{1}{2},\lambda_{M_Kl}\right)\Big|^2
	=
	2 \sum_{\substack{\mathfrak n \subseteq\mathcal O_K\\ \mathfrak n\ne(0)}}\frac{\tau_K(\mathfrak n)\lambda_{M_Kl}(\mathfrak n)}{N(\mathfrak n)^{1/2}}
	W_{|M_Kl|}\left(\frac {(2\pi)^2N(\mathfrak n)}{|D_K|}\right),
\end{align}
 where
\[
\tau_K(\mathfrak n)
:=
\sum_{\substack{\mathfrak a\subseteq\mathcal O_K\\
\mathfrak a \mid \mathfrak n}}
1,
\]
 and where
\begin{align}
\label{Wk}
W_{x}(y)
=
\frac{1}{2\pi i}
\int_{(3)}
\frac{\Gamma(\frac{1}{2}+x+s)^2}{\Gamma(\frac{1}{2}+x)^2}
y^{-s}e^{s^2}\frac{ds}{s}.
\end{align}
\end{lemma}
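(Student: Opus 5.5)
By \eqref{Lsquare}, it suffices to obtain an approximate functional equation for $L(\frac 12,\lambda_{M_Kl})^2$. The plan is to treat $L(s,\lambda_{M_Kl})^2$ as a degree-four $L$-function. Its functional equation comes from squaring \eqref{fcneqn}, and its Dirichlet coefficients come from multiplying out the Euler product.

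First, the Dirichlet series. For $\Re(s)>1$, the complete multiplicativity of $\lambda_{M_Kl}$ gives
\[
L(s,\lambda_{M_Kl})^2=\sum_{\mathfrak a,\mathfrak b}\frac{\lambda_{M_Kl}(\mathfrak a\mathfrak b)}{N(\mathfrak a\mathfrak b)^s}=\sum_{\mathfrak n\ne(0)}\frac{\tau_K(\mathfrak n)\lambda_{M_Kl}(\mathfrak n)}{N(\mathfrak n)^{s}}.
\]
Next, the functional equation. Squaring \eqref{LambdaL} and \eqref{fcneqn} shows that
\[
\Lambda_l(s)^2=|D_K|^{s}(2\pi)^{-2s}\Gamma(s+|M_Kl|)^2L(s,\lambda_{M_Kl})^2
\]
is entire and satisfies $\Lambda_l(s)^2=\Lambda_l(1-s)^2$. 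Moreover, $L(s,\lambda_{M_Kl})^2$ is self-dual up to conjugation of the coefficients.

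Now run the contour-shift argument of \cite[Theorem~5.3]{iwakow} with $G(s)=e^{s^2}$. Consider
\[
I=\frac{1}{2\pi i}\int_{(3)}\frac{\Lambda_l(\frac12+s)^2}{\Gamma(\frac12+|M_Kl|)^2}\,e^{s^2}\frac{ds}{s}.
\]
Move the line of integration to $\Re(s)=-3$. The only pole crossed is at $s=0$, with residue $\Lambda_l(\frac12)^2/\Gamma(\frac12+|M_Kl|)^2$, since $\Lambda_l$ is entire. The horizontal segments vanish because of the decay of $e^{s^2}$ together with Stirling's formula and polynomial growth of $L$ in vertical strips. On the new line, substitute $s\to -s$, apply $\Lambda_l(\frac12-s)^2=\Lambda_l(\frac12+s)^2$, and use that $e^{s^2}$ is even. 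This shows the shifted integral equals $-I$, so the residue is $2I$.

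To finish, divide by $|D_K|^{1/2}(2\pi)^{-1}$ and expand the Dirichlet series termwise on $\Re(s)=3$, where it converges absolutely. The factor $|D_K|^{s}(2\pi)^{-2s}N(\mathfrak n)^{-s}$ becomes $\big((2\pi)^2N(\mathfrak n)/|D_K|\big)^{-s}$. This yields \eqref{afesquare} with $W_{|M_Kl|}$ as in \eqref{Wk}.

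One point needs care. The paper's identity $\overline{A_{M_Kl}(n)}=A_{M_Kl}(n)$, where $A_{M_Kl}(n)=\sum_{N(\mathfrak a)=n}\lambda_{M_Kl}(\mathfrak a)$, must be used to ensure the dual sum equals the original, so that the two halves combine into the factor $2$. Concretely, the dual sum carries $\overline{\lambda_{M_Kl}}$. Grouping by norm, the coefficients become $\sum_{N(\mathfrak n)=n}\tau_K(\mathfrak n)\lambda_{M_Kl}(\mathfrak n)=\sum_{m\mid n}A_{M_Kl}(m)A_{M_Kl}(n/m)$, which is real. Hence the two sums coincide, exactly as in Lemma \ref{approximate functional equation}, and we then ungroup to return to the sum over ideals. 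This bookkeeping is the only step needing attention; the rest is routine.
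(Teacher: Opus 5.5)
Your proposal is correct and follows the same route as the paper: integrate $\Lambda_l(\frac12+s)^2e^{s^2}/s$ on $\Re(s)=3$, use the self-dual functional equation \eqref{fcneqn} to get $\Lambda_l(\frac12)^2=2I$, and then expand the Dirichlet series of $L(s,\lambda_{M_Kl})^2$, whose coefficients are $\tau_K(\mathfrak n)\lambda_{M_Kl}(\mathfrak n)$. Your final paragraph on the conjugate dual sum is harmless but unnecessary, since \eqref{fcneqn} already has the form $\Lambda_l(s)=\Lambda_l(1-s)$, so your contour argument never produces a sum involving $\overline{\lambda_{M_Kl}}$.
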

\begin{proof}
  Consider the integral
	\[
		I
		=
		\frac{1}{2\pi i}
		\int_{(3)}
		\Lambda^2_{M_Kl}(\tfrac{1}{2} + s)e^{s^2}
  \frac{\mathrm{d}s}{s}.
	\]
    Then by Cauchy's theorem, we have
\begin{align}
\label{lambdaI}
    	\Lambda^2_{M_Kl}(\frac{1}{2})
	=2I.
\end{align}
Note that by \eqref{LambdaL} and \eqref{Lsquare},
\begin{align}
\label{lambdasquarecentral}
\Lambda^2_{M_Kl}(\frac{1}{2})
=
\frac{|D_K|^{1/2}\Gamma(\frac 12+|M_Kl|)^2}{2\pi}
|L(\frac12,\lambda_{M_Kl})|^2. 
\end{align}
  On the other hand, we may expand \(\Lambda^2_{M_Kl}(\tfrac{1}{2} + s)\) into Dirichlet series and grouping the ideals according to
\(\mathfrak n=N(\mathfrak a)N(\mathfrak b)\) to evaluate $I$. We deduce from this and \eqref{lambdaI}--\eqref{lambdasquarecentral} that the expression given in 
\eqref{afesquare} is valid. This completes the proof of the lemma.
\end{proof}

\section{Proof of Theorem \ref{1stmoment}}
\label{sec:proofofmainthm}

     We apply Lemma \ref{approximate functional equation} and \eqref{Vk} to see that
\begin{align}
\label{Ldcomp}
  \sum_{X < l \leq 2X}L(\frac 12, \lambda_{M_Kl})
= 2\sum_{X < l \leq 2X}\sum_{\substack{\mathfrak a\subseteq\mathcal O_K\\ \mathfrak a\ne(0)}}
\frac{\lambda_{M_Kl}(\mathfrak a)}{N(\mathfrak a)^{1/2}}
V_{M_Kl}\left(\frac {2\pi N(\mathfrak a)}{\sqrt{|D_K|}}\right):=S+R_1+R_2,
\end{align}
  where
\begin{align*}
  S
= & 2 \sum_{X < l \leq 2X}\sum_{\substack{\mathfrak a\subseteq\mathcal O_K\\ \mathfrak a=(n) \\ n \in \mn}}
\frac{\lambda_{M_Kl}(\mathfrak a)}{N(\mathfrak a)^{1/2}}
V_{M_Kl}\left(\frac {2\pi N(\mathfrak a)}{\sqrt{|D_K|}}\right),  \\
R_1
=& 2 \sum_{X < l \leq 2X}\sum_{\substack{\mathfrak a\subseteq\mathcal O_K, N(\mathfrak a) \leq X \\ \mathfrak a\ne(0) \\ \mathfrak a \neq (n), n \in \mn}}
\frac{\lambda_{M_Kl}(\mathfrak a)}{N(\mathfrak a)^{1/2}}
V_{M_Kl}\left(\frac {2\pi N(\mathfrak a)}{\sqrt{|D_K|}}\right), \\
R_2
= & 2 \sum_{X < l \leq 2X}\sum_{\substack{\mathfrak a\subseteq\mathcal O_K, N(\mathfrak a) > X \\ \mathfrak a\ne(0) \\ \mathfrak a \neq (n), n \in \mn}}
\frac{\lambda_{M_Kl}(\mathfrak a)}{N(\mathfrak a)^{1/2}}
V_{M_Kl}\left(\frac {2\pi N(\mathfrak a)}{\sqrt{|D_K|}}\right).
\end{align*}

   Note that when $\mathfrak a=(n)$ with $n \in \mn$, we have $\lambda_{M_Kl}(\mathfrak a)=1$ for all $X < l \leq 2X$. It follows that we have
\begin{align*}
  S
= 2\sum_{X < l \leq 2X}\sum_{\substack{n \in \mn}}
\frac{1}{n}
V_{M_Kl}\left(\frac {2\pi n^2}{\sqrt{|D_K|}}\right)=\sum_{X < l \leq 2X}\frac 2{2\pi i}\int_{(3)}
\frac{\Gamma(\frac{1}{2}+M_Kl+s)}{\Gamma(\frac{1}{2}+M_Kl)}
\left(\frac {2\pi }{\sqrt{|D_K|}}\right)^{-s}e^{s^2}\zeta(1+2s)\frac{ds}{s},
\end{align*}
  where $\zeta(s)$ is the Riemann zeta function. We now shift the line of integration above to $\Re(s)=-1$. As $\zeta(s)$ has a simple pole
at $s=1$ with the corresponding residue being $1$, we see that we encounter a double pole at $s=0$ in the process, whose residue can be easily computed to be
\begin{align*}
 \frac{\Gamma'(\frac{1}{2}+M_Kl)}{2\Gamma(\frac{1}{2}+M_Kl)}+O(1),
\end{align*}
  where the implied constant depends on $K$ only. It follows that
\begin{align}
\label{Sest}
  S= \sum_{X < l \leq 2X}\frac{\Gamma'(\frac{1}{2}+M_Kl)}{\Gamma(\frac{1}{2}+M_Kl)}+O(1)+S_1,
\end{align}
  where
\begin{align*}
  S_1= \sum_{X < l \leq 2X}\frac 2{2\pi i}\int_{(-1)}
\frac{\Gamma(\frac{1}{2}+M_Kl+s)}{\Gamma(\frac{1}{2}+M_Kl)}
\left(\frac {2\pi }{\sqrt{|D_K|}}\right)^{-s}e^{s^2}\zeta(1+2s)\frac{ds}{s}.
\end{align*}

   Note that by Stirling's formula (see \cite[(5.112)]{iwakow}), we have
\begin{align*}
  \log \Gamma(s)=\frac12\log (2\pi)+(s-\frac 12)\log s-s+O(\frac 1{|s|}).
\end{align*}
   It follows from this that for $|\sigma| \leq 10$ and $X < u \leq 2X$,  we have
\begin{align*}
  \log \Gamma(\frac{1}{2}+M_Ku+\sigma)-\log \Gamma(\frac{1}{2}+M_Ku) =&(M_Ku+\sigma)\log (\frac{1}{2}+M_Ku+\sigma)-M_Ku\log (\frac{1}{2}+M_Ku)-\sigma+
  O(\frac 1{X}) \\
  =&(M_Ku+\sigma)\log (\frac{1}{2}+M_Ku)-M_Ku\log (\frac{1}{2}+M_Ku)+O(1) \\
  =& \sigma\log (\frac{1}{2}+M_Ku)+O(1).
\end{align*}
  The above implies that $|\sigma| \leq 10$ and $X < u \leq 2X$,  we have
\begin{align}
\label{Gammaratiobound}
  \frac{\Gamma(\frac{1}{2}+M_Ku+\sigma)}{\Gamma(\frac{1}{2}+M_Ku)}
  \ll & (\frac{1}{2}+M_Ku)^\sigma \ll X^{\sigma}.
\end{align}

  Note also that we have for $x \geq 1, y \in \mr$,
\begin{align}
\label{Gammabound}
 |\Gamma(x+iy)| \leq \Gamma(x).
\end{align}
 We apply \eqref{Gammaratiobound} and \eqref{Gammabound} to see that
\begin{align*}
  \frac {|\Gamma(-\frac 12+M_Kl+it)|}{\Gamma(\frac 12+M_Kl)}\ll \frac{\Gamma(-\frac 12+M_Kl)}{\Gamma(\frac 12+M_Kl)} \ll \frac {1}{-\frac 12+M_Kl}.
\end{align*}

  It follows that
\begin{align}
\label{S1est}
  S_1 \ll \int_{(-1)}\Big |e^{s^2}\zeta(1+2s)\frac{ds}{s}\Big | \ll \int_{(-1)}\Big |e^{s^2}\zeta(-2s)\frac {\Gamma(-2s)}{\Gamma(1+2s)}\frac{ds}{s}\Big |,
\end{align}
  where the last estimation above follows from the well-known functional equation (see \cite[Corollary 10.3]{MVa1}) for $\zeta(s)$ that
\begin{align*}
 \zeta(s)\Gamma(s)\pi^{-s/2}=\zeta(1-s)\Gamma(1-s)\pi^{-(1-s)/2}.
\end{align*}

  Stirling's formula (see \cite[(5.113)]{iwakow}) yields
\begin{align*}
  \frac {\Gamma(-2s)}{\Gamma(1+2s)} \ll (1+|s|)^{-4\Re (s)-1}.
\end{align*}
  We deduce from this and \eqref{S1est} that
\begin{align*}
  S_1 \ll 1.
\end{align*}

    We apply the above into \eqref{Sest} to see that
\begin{align}
\label{Msum}
  S= \sum_{X < l \leq 2X}\frac{\Gamma'(\frac{1}{2}+M_Kl)}{\Gamma(\frac{1}{2}+M_Kl)}+O(1).
\end{align}
   We further note from \cite[(C.17)]{MVa1} that we have
\begin{align*}
  \frac{\Gamma'(\frac{1}{2}+M_Kl)}{\Gamma(\frac{1}{2}+M_Kl)}=\log (\frac{1}{2}+M_Kl)+O(\frac 1{1/2+M_Kl})=\log (M_Kl)+O(1).
\end{align*}
   It follows from this and \eqref{Msum} that we have by partial summation,
\begin{align}
\label{Mval}
  S= \sum_{X < l \leq 2X}\log (M_Kl)+O(1)=X\log X+(\log (4M_K))X+O(\log X).
\end{align}

  Next, we estimate $R_1$ and $R_2$ by keeping the line of integration in $V_{M_Kl}$ to be at $\Re(s)=3$ for $R_2$ while shifting the line of integration
   to $\Re(s)=-1$ for $R_1$. In this process we encounter a simple pole with residue $1$ at $s=0$, whose contribution to $R_1$ is denoted by $R_{1,1}$. We denote $R_{1,2}$ the contribution to $R_1$ on the new line of integration, so that we have
\begin{align}
\label{R1decomp}
  R_1=R_{1,1}+R_{1,2},
\end{align}
   where
\begin{align}
\label{R112exp}
\begin{split}
R_{1,1}=& 2 \sum_{X < l \leq 2X}\sum_{\substack{\mathfrak a\subseteq\mathcal O_K, N(\mathfrak a) \leq X \\ \mathfrak a\ne(0) \\ \mathfrak a \neq (n), n \in \mn}}
\frac{\lambda_{M_Kl}(\mathfrak a)}{N(\mathfrak a)^{1/2}}, \\
R_{1,2}
= & 2 \sum_{X < l \leq 2X}\sum_{\substack{\mathfrak a\subseteq\mathcal O_K, N(\mathfrak a) \leq X \\ \mathfrak a\ne(0) \\ \mathfrak a \neq (n), n \in \mn}}
\frac{\lambda_{M_Kl}(\mathfrak a)}{N(\mathfrak a)^{1/2}}\cdot \frac{1}{2\pi i}
\int_{(-1)}
\frac{\Gamma(\frac{1}{2}+M_Kl+s)}{\Gamma(\frac{1}{2}+M_Kl)}
\left(\frac {2\pi N(\mathfrak a)}{\sqrt{|D_K|}}\right)^{-s}e^{s^2}\frac{ds}{s}.
\end{split}
\end{align}

   We note that when $\mathfrak a \neq (n)$ with $n \in \mn$, we have $\theta_{\A} \notin  \pi \mz$, so that similar to \cite[(6.3)]{Waxman21}, we have for any real
$u \geq 1$,
\begin{align}
\label{angle average}
 \sum_{1 \leq l\leq u}\lambda_{M_Kl}(\A) \ll \frac{1}{|\sin (M_K\theta_{\A})|}.
\end{align}
  Now, our choice of $\theta_K \in [-\pi/|U_K|, \pi/|U_K|)$ and the value of $M_K=|U_K|/2$ ensures that we have
  $M_K\theta_{\A} \in [-\pi/2, \pi/2)$. It then follows from the well-known estimation $ 2x/\pi \leq \sin x \leq x$ for $0<x \leq \pi/2$ that we have
\begin{align}
\label{sbound}
 \frac{1}{|\sin (M_K\theta_{\A})|} \ll \frac{1}{|\theta_{\A}|}\ll \frac{1}{|\sin (\theta_{\A})|}.
\end{align}
  Recall (see \cite[Corollary 2, Chap. 2]{Marcus18}) that the ring of integers $\mathcal{O}_K$ is a free $\mz$ module such that $\mathcal{O}_K=\mz+\omega_K \mz$, where
\begin{align*}
   \omega_K & =\begin{cases}
     \frac {1}{2}(1+\sqrt{d}) \qquad & \text{if $d \equiv 1 \pmod 4$ }, \\
     \sqrt{d} \qquad & \text{if $d \equiv 2, 3 \pmod 4$ }.
    \end{cases}
\end{align*}
   We apply this to see that we may write any $\mathfrak a \neq (n)$ with $n \in \mn$ as $\mathfrak a=(a+b\omega_K)$ with $a, b \in \mz, b \neq 0$. It follows from this that we have
\begin{align*}
 |\sin (\theta_{\A})| \gg \frac {|b|}{\sqrt{N(\A)}},
\end{align*}
   so that
\begin{align}
\label{thetabound}
 \frac 1{|\sin (\theta_{\A})|} \ll \frac {\sqrt{N(\A)}}{|b|}.
\end{align}
   We deduce from \eqref{angle average}-\eqref{thetabound} that for any $\mathfrak a=(a+b\omega_K)$ with $a, b \in \mz, b \neq 0$ and any $X \leq u \leq 2X$,
\begin{align}
\label{lambdaps}
 \sum_{X < l\leq u}\lambda_{M_Kl}(\A) \ll \frac {\sqrt{N(\A)}}{|b|}.
\end{align}

   We note moreover that for any $u \geq 5$,
\begin{align}
\label{bpartialsum}
  \sum_{\substack{\mathfrak a=(a+b\omega_K), b\neq 0, N(\mathfrak a) \leq u }} \frac{1}{|b|} \ll \sum_{1 \leq b \leq \sqrt{u}}\frac 1{b} \sum_{1 \leq a \leq \sqrt{u}}1 \ll u^{1/2}\log u.
\end{align}
  In particular, this together with \eqref{lambdaps} implies that
\begin{align}
\label{R11est}
  R_{1,1} \ll X^{1/2}\log X.
\end{align}

  We further deduce from \eqref{lambdaps} and partial summation that
  when $\mathfrak a=(a+b\omega_K)$ with $a, b \in \mz, b \neq 0$, we have
\begin{align}
\label{sumlambda}
 \sum_{X < l\leq 2X}\frac {\lambda_{M_Kl}(\A)}{\sqrt{N(\A)}} \frac{\Gamma(\frac{1}{2}+M_Kl+s)}{\Gamma(\frac{1}{2}+M_Kl)}\ll
 \frac 1{|b|}\Big |\frac{\Gamma(\frac{1}{2}+M_KX+s)}{\Gamma(\frac{1}{2}+M_KX)}\Big |+\frac 1{|b|}
 \Big |\frac{\Gamma(\frac{1}{2}+2M_KX+s)}{\Gamma(\frac{1}{2}+2M_KX)}\Big | + \frac 1{|b|}\int^{2X}_X\Big |h'(u)\Big |du,
\end{align}
  where we set
\begin{align*}
 h(u):=\frac{\Gamma(\frac{1}{2}+M_Ku+s)}{\Gamma(\frac{1}{2}+M_Ku)}.
\end{align*}

  We apply \cite[Lemma 3]{white22} to see that when $\mathfrak a=(a+b\omega_K)$ with $a, b \in \mz, b \neq 0$,
\begin{align}
\label{rhoderbound}
 |h'(u)| \ll |h(u)|\cdot |\log (1+\frac {s}{M_Ku})| \ll |h(u)|\cdot \log (1+|\frac {s}{M_Ku}|) \ll |h(u)|\cdot |\frac {s}{u}|,
\end{align}
  where the last estimation above follows from the well-known bound that $1+x \leq e^x$ for any $x \in \mr$.

  We substitute \eqref{rhoderbound} into \eqref{sumlambda} to deduce that for $\mathfrak a=(a+b\omega_K)$ with $a, b \in \mz, b \neq 0$,
\begin{align*}
 \sum_{X < l\leq 2X}\frac {\lambda_{M_Kl}(\A)}{\sqrt{N(\A)}} \frac{\Gamma(\frac{1}{2}+M_Kl+s)}{\Gamma(\frac{1}{2}+M_Kl)}\ll
 \frac {|s|}{|b|}\max_{X < u \leq 2X} \Big ( \Big |\frac{\Gamma(\frac{1}{2}+M_Ku+s)}{\Gamma(\frac{1}{2}+M_Ku)}\Big |\Big ).
\end{align*}

   Using the relation given in \eqref{Gammabound}, we infer from the above that for $\mathfrak a=(a+b\omega_K)$ with $a, b \in \mz, b \neq 0$,
\begin{align}
\label{lambdapartialsum}
\begin{split}
 \sum_{X < l\leq 2X}\frac {\lambda_{M_Kl}(\A)}{\sqrt{N(\A)}} \frac{\Gamma(\frac{1}{2}+M_Kl+s)}{\Gamma(\frac{1}{2}+M_Kl)}\ll
 \frac {|s|}{|b|}\max_{X < u \leq 2X} \Big ( \Big |\frac{\Gamma(\frac{1}{2}+M_Ku+\sigma)}{\Gamma(\frac{1}{2}+M_Ku)}\Big |\Big ),
\end{split}
\end{align}
 where we set $s=\sigma+it$ with $\sigma, t \in \mr$.
  
   We apply \eqref{Gammaratiobound} into \eqref{lambdapartialsum} to see that for $|\sigma| \leq 10$,  we have for $\mathfrak a=(a+b\omega_K)$ with $a, b \in \mz, b \neq 0$,
\begin{align}
\label{sumlambdagamma}
\begin{split}
 \sum_{X < l\leq 2X}\frac {\lambda_{M_Kl}(\A)}{\sqrt{N(\A)}} \frac{\Gamma(\frac{1}{2}+M_Kl+s)}{\Gamma(\frac{1}{2}+M_Kl)}\ll
  \frac {|s|}{|b|}X^{\sigma}.
\end{split}
\end{align}

  It follows from the above and \eqref{R112exp} that we have 
\begin{align}
\label{R12bound}
  R_{1,2} \ll \sum_{\substack{\mathfrak a=(a+b\omega_K), b\neq 0, N(\mathfrak a) \leq X }} \frac{N(\mathfrak a)}{|b|X} \ll X^{1/2}\log X,
\end{align}
  where the last estimation above follows from \eqref{bpartialsum} and partial summation.

   Similarly, we have
\begin{align}
\label{R3bound}
  R_2 \ll & X^{1/2}\log X.
\end{align}

   The asymptotical expression given in \eqref{1stmomentasymp} now follows from \eqref{Ldcomp}, \eqref{Mval}, \eqref{R1decomp}, \eqref{R11est}, \eqref{R12bound} and \eqref{R3bound}. This completes the proof of Theorem \ref{1stmoment}.

\section{Proof of Theorem \ref{2ndmoment}}
\label{sec:proofof2ndmainthm}

     We apply Lemma \ref{approximate functional equation-square} and \eqref{Wk} to see that
\begin{align}
\label{L2dcomp}
  \sum_{X < l \leq 2X}|L(\frac 12, \lambda_{M_Kl})|^2
= 2\sum_{X < l \leq 2X}\sum_{\substack{\mathfrak n \subseteq\mathcal O_K\\ \mathfrak n \ne(0)}}
\frac{\tau_K(\mathfrak n)\lambda_{M_Kl}(\mathfrak n)}{N(\mathfrak n)^{1/2}}
W_{M_Kl}\left(\frac {(2\pi)^2 N(\mathfrak n)}{|D_K|}\right):=M+T_1+T_2,
\end{align}
  where
\begin{align*}
  M
= & 2 \sum_{X < l \leq 2X}\sum_{\substack{\mathfrak n \subseteq\mathcal O_K\\ \mathfrak n=(n) \\ n \in \mn}}
\frac{\tau_K(\mathfrak n)\lambda_{M_Kl}(\mathfrak n)}{N(\mathfrak n)^{1/2}}
W_{M_Kl}\left(\frac {(2\pi)^2 N(\mathfrak n)}{|D_K|}\right),  \\
T_1
=& 2 \sum_{X < l \leq 2X}\sum_{\substack{\mathfrak n\subseteq\mathcal O_K, N(\mathfrak n) \leq X^2 \\ \mathfrak n \ne(0) \\ \mathfrak n \neq (n), n \in \mn}}
\frac{\tau_K(\mathfrak n)\lambda_{M_Kl}(\mathfrak n)}{N(\mathfrak n)^{1/2}}
W_{M_Kl}\left(\frac {(2\pi)^2 N(\mathfrak n)}{|D_K|}\right), \\
T_2
= & 2 \sum_{X < l \leq 2X}\sum_{\substack{\mathfrak n \subseteq\mathcal O_K, N(\mathfrak n) > X^2 \\ \mathfrak n \ne(0) \\ \mathfrak n \neq (n), n \in \mn}}
\frac{\tau_K(\mathfrak n)\lambda_{M_Kl}(\mathfrak n)}{N(\mathfrak n)^{1/2}}
W_{M_Kl}\left(\frac {(2\pi)^2 N(\mathfrak n)}{|D_K|}\right).
\end{align*}

   Note again that when $\mathfrak n=(n)$ with $n \in \mn$, we have $\lambda_{M_Kl}(\mathfrak n)=1$ for all $X < l \leq 2X$. It follows that we have
\begin{align*}
  M
= 2\sum_{X < l \leq 2X}\sum_{\substack{n \in \mn}}
\frac{\tau_K(n)}{n}
W_{M_Kl}\left(\frac {(2\pi)^2 n^2}{|D_K|}\right)=\sum_{X < l \leq 2X}\frac 2{2\pi i}\int_{(3)}
\frac{\Gamma(\frac{1}{2}+M_Kl+s)^2}{\Gamma(\frac{1}{2}+M_Kl)^2}
\left(\frac {(2\pi)^2 }{|D_K|}\right)^{-s}e^{s^2}\Big (\sum_{n \geq 1}\frac {\tau_K(n)}{n^{1+2s}}\Big )\frac{ds}{s}.
\end{align*}

  Note that (see \cite[Section 3.8]{iwakow}) a rational prime $p$ ramifies in $O_K$ if and only if $\chi_d(p)=0$, where $\chi_d=\leg {d}{\cdot}$ is the Kronecker symbol. Also, $p$ splits (resp. is inert) in $O_K$ if and only if $\chi_d(p)=1$ (resp. $\chi_d(p)=-1$). We apply this to rewrite the Dirichlet series $\sum_{n \geq 1}\frac {\tau_K(n)}{n^{1+2s}}$ into Euler products to see that we have
\begin{align*}
 \sum_{n \geq 1}\frac {\tau_K(n)}{n^{1+2s}}=\prod_{p \ \ \text{rational prime}}\left(1+\frac {3+\chi_d(p)}{p^{1+2s}}+O(\frac 1{p^{2+4s}}) \right )=\zeta^3(1+2s)L(1+2s, \chi_d)F_K(s),
\end{align*}
  where $L(s, \chi_d)$ is the Dirichlet $L$-function associated to $\chi_d$ and $F_K(s)$ is analytical in the region $\Re(s) \geq -1/4+\varepsilon$ for any $\varepsilon>0$.

   We now shift the line of integration above to $\Re(s)=-1/4+\varepsilon$ to encounter a pole of order $4$ at $s=0$, whose residue can be easily computed to be
\begin{align}
\label{residue1}
 \frac{L(1, \chi_d)F_K(0)\Gamma^{(3)}(\frac{1}{2}+M_Kl)}{2\Gamma(\frac{1}{2}+M_Kl)}+
 O\Big(\frac{\Gamma^{(2)}(\frac{1}{2}+M_Kl)}{2\Gamma(\frac{1}{2}+M_Kl)}+\frac{\Gamma^{'}(\frac{1}{2}+M_Kl)}{2\Gamma(\frac{1}{2}+M_Kl)}+1\Big),
\end{align}
  where the implied constant depends on $K$ only.

  Note that (see \cite[\S 12.31]{WW27}) we have for $\Re(z) \geq 0$,
\begin{align*}
\frac {\Gamma'(z)}{\Gamma(z)}=\log z-\frac 1{2z}-\int^{\infty}_0(\frac 12-\frac 1u+\frac 1{e^u-1})e^{-uz}du.
\end{align*}
  The above implies that
\begin{align*}
\Gamma'(z)=\Gamma(z)\Big (\log z-\frac 1{2z}-\int^{\infty}_0(\frac 12-\frac 1u+\frac 1{e^u-1})e^{-uz}du\Big ).
\end{align*}
  Differentiating both sides above with respect to $z$ now implies that for $\Re(z) \geq 0$,
\begin{align*}
\Gamma^{(n)}(z)=\Gamma(z)\Big ((\log z)^n+O((\log z)^{n-1})\Big ).
\end{align*}
   We apply the above to see that the expression in \eqref{residue1} equals
\begin{align*}
 \frac{L(1, \chi_d)F_K(0)}{2}\log^3 (\frac{1}{2}+M_Kl)+
 O\Big(\log^2 (\frac{1}{2}+M_Kl)\Big).
\end{align*}

  It follows that
\begin{align}
\label{Mest}
  M= \sum_{X < l \leq 2X}\Big (L(1, \chi_d)F_K(0)\log^3 (\frac{1}{2}+M_Kl)+
 O\Big(\log^2 (\frac{1}{2}+M_Kl)\Big)\Big )+M_1,
\end{align}
  where
\begin{align*}
  M_1= \sum_{X < l \leq 2X}\frac 2{2\pi i}\int_{(-1/4+\varepsilon)}
\frac{\Gamma(\frac{1}{2}+M_Kl+s)^2}{\Gamma(\frac{1}{2}+M_Kl)^2}
\left(\frac {(2\pi)^2 }{|D_K|}\right)^{-s}e^{s^2}\zeta^3(1+2s)L(1+2s, \chi_d)F_K(s)\frac{ds}{s}.
\end{align*}

 We recall the convexity bound for $L(s, \chi_d)$ (see \cite[Exercise 3, p. 100]{iwakow}) asserts that
\begin{align*} 
\begin{split}
   L(s, \chi_d) \ll \begin{cases}
   1 \qquad & \Re(s) >1,\\
   (|d|(1+|s|))^{(1-\Re(s))/2+\varepsilon} \qquad & 0< \Re(s) <1,\\
    (|d|(1+|s|))^{1/2-\Re(s)+\varepsilon} \qquad & \Re(s) \leq 0.
\end{cases}
\end{split}
\end{align*}
  Note that the above estimation also holds by replacing $L(s, \chi_d)$ with $\zeta(s)$ and $d$ with $1$.

  We apply the above and \eqref{Gammabound} to see that
\begin{align*}
 M_1 \ll  \sum_{X < l \leq 2X}\frac{\Gamma(\frac{1}{4}+M_Kl+\varepsilon)^2}{\Gamma(\frac{1}{2}+M_Kl)^2} \ll X^{1/2+2\varepsilon},
\end{align*}
   where the last estimation above follows from \eqref{Gammaratiobound}.

    We apply the above into \eqref{Mest} to see that
\begin{align}
\label{Mainsum}
  M= C_KX(\log X)^3+O(X(\log X)^2),
\end{align}
  where $C_K$ is a constant depending on $K$ only.

  Next, we estimate $T_1$ and $T_2$ by keeping the line of integration in $W_{M_Kl}$ to be at $\Re(s)=3$ for $T_2$ while shifting the line of integration
   to $\Re(s)=-1$ for $T_1$. In this process we encounter a simple pole with residue $1$ at $s=0$, whose contribution to $T_1$ is denoted by $T_{1,1}$. We denote $T_{1,2}$ the contribution to $T_1$ on the new line of integration, so that we have
\begin{align}
\label{T1decomp}
  T_1=T_{1,1}+T_{1,2},
\end{align}
   where
\begin{align}
\label{T112exp}
\begin{split}
T_{1,1}=& 2 \sum_{X < l \leq 2X}\sum_{\substack{\mathfrak n\subseteq\mathcal O_K, N(\mathfrak n) \leq X^2 \\ \mathfrak n\ne(0) \\ \mathfrak n \neq (n), n \in \mn}}
\frac{\tau_K(\mathfrak n)\lambda_{M_Kl}(\mathfrak n)}{N(\mathfrak n)^{1/2}}, \\
T_{1,2}
= & 2 \sum_{X < l \leq 2X}\sum_{\substack{\mathfrak n\subseteq\mathcal O_K, N(\mathfrak n) \leq X^2 \\ \mathfrak n\ne(0) \\ \mathfrak n \neq (n), n \in \mn}}
\frac{\tau_K(\mathfrak n) \lambda_{M_Kl}(\mathfrak n)}{N(\mathfrak n)^{1/2}}\cdot \frac{1}{2\pi i}
\int_{(-1)}
\frac{\Gamma(\frac{1}{2}+M_Kl+s)^2}{\Gamma(\frac{1}{2}+M_Kl)^2}
\left(\frac {(2\pi)^2 N(\mathfrak n)}{|D_K|}\right)^{-s}e^{s^2}\frac{ds}{s}.
\end{split}
\end{align}

  We now apply \eqref{lambdaps} to see that
\begin{align}
\label{T11est}
  T_{1,1} \ll  \sum_{\substack{\mathfrak n=(a+b\omega_K), b \neq 0, N(\mathfrak n) \leq X^2 }}
\frac{\tau_K(\mathfrak n)}{|b|}.
\end{align}

  Note that for any $\mathfrak a \mid \mathfrak n$, we have $N(\mathfrak a) \mid N(\mathfrak n)$. Moreover, if $\mathfrak a=(\alpha)$ and $\mathfrak b=(\beta)$ satisfy that $N(\mathfrak a) = N(\mathfrak b)$, then we have $\alpha=\beta u$ with $u \in U_K$. It follows that we have
\begin{align*}
   \tau_K(\mathfrak n) \ll \tau(N(\mathfrak n)),
\end{align*}  
   where $\tau$ is the usual divisor function on $\mz$.  Thus we have
\begin{align}
\label{taupartialsum}
  \sum_{\substack{\mathfrak n=(a+b\omega_K), b \neq 0, N(\mathfrak n) \leq u }}
\frac{\tau_K(\mathfrak n)}{|b|} \ll \sum_{1 \leq  b \leq \sqrt{u}}\frac 1{b}\sum_{1 \leq  a \leq \sqrt{u}}\tau(N(\mathfrak n)).
\end{align}   

  Note that for $\mathfrak n=(a+b\omega_K)$ we have $N(\mathfrak n)=f_K(a;b)$, where
\begin{align*}
  f_K(a;b)=
\begin{cases}
  a^2-db^2, \quad d \equiv 2, 3 \pmod 4, \\
  a^2+ab+b^2, \quad d \equiv 1 \pmod 4.
\end{cases}
\end{align*}   
  Observe that for any fixed $b \in \mz$, $f_K(a;b)$ is an irreducible quadratic polynomial of $a$ with integer coefficients.
   
  We now recall a result of J. McKee \cite[(12)]{McKee95} that for any $x \geq 2$,
 \begin{align}
\label{taupartialsumasp}
  \sum_{\substack{a \leq x}} \tau(f_K(a;b)) =2x\sum_{m \leq x}\frac {\rho(m)}{m}+O(\sum_{m \leq x}\rho(m))+O(x),
\end{align}
   where for any positive integer $m$, $\rho(m)$ denotes the number of solutions to the congruence equation
\begin{align*}
 f_K(n;b) \equiv 0 \pmod m, \quad 0 \leq n < m.
\end{align*}

 Let $f(x, y) = Ax^2+Bxy + Cy^2$ with $A,B, C \in \mz$ and be a positive definite binary quadratic form, such that
 its discriminant $\Delta = B^2 - 4AC < 0$, and $A > 0$. Given
\begin{align*}
 \beta=
 \begin{pmatrix}
  \beta_1 & \beta_2 \\
  \beta_3 & \beta_4
\end{pmatrix} 
  \in \text{SL}_2(\mz),
\end{align*}
  we define $f^{\beta}(x, y) = f(\beta_1 x + \beta_3y, \beta_2x + \beta_4y)$. Then $f^{\beta}$ has the same discriminant as $f$. Forms
related in this way are called equivalent. The form $f$ is called reduced if $|B| \leq A \leq C$,
with $|B|=  0$ if either $A = |B|$ or $A = C$. We recall from \cite[Section 2]{McKee95} that each positive definite binary quadratic form
is equivalent to precisely one reduced form. The number of reduced forms with
discriminant $\Delta$ is finite, and is denoted $H(\Delta)$. The subgroup of $\text{SL}_2(\mz)$ consisting of
those $\beta$ such that $f^{\beta} =f$ is called the automorphism group of $f$.

   Write $(A,B,C)$ as a shorthand for $Ax^2+Bxy + Cy^2$. Let $w(A,B,C)$ denote
the size of the automorphism group of $(A,B,C)$. Then $w(A,A,A) = 6, w(A,0,A) = 4$,
and otherwise, if $(A,B, C)$ is reduced, we have $w(A,B, C) = 2$. Define the Hurwitz class number $H^*(\Delta)$ such that
\begin{align*}
 H^*(\Delta)=\sum_{\substack{(A,B, C) \ \text{reduced} \\ B^2-4AC=\Delta}}\frac {2}{w(A,B,C)}.
\end{align*}

   Note that the discriminant $\Delta_K$ of our function $f_K(n;b)$ equals
\begin{align}
\label{disc}
 \Delta_K=
\begin{cases}
  -4db^2, \quad d \equiv 2, 3 \pmod 4, \\
  -db^2, \quad d \equiv 1 \pmod 4.
\end{cases}
\end{align}    

  It is then shown in \cite[(16)]{McKee95} that for any $x \geq 2$,
\begin{align*}
  \sum_{m \leq x}\rho(m)=\sum_{\substack{(A,B, C) \text{reduced} \\ B^2-4AC=\Delta_K}}\frac {1}{W(A,B,C)}\sum_{g \leq \sqrt{x}}\mu(g)
  \sum_{1\leq Ax^2+Bxy+Cy^2 \leq x/g^2}1,
\end{align*}
  where $\mu$ is the M\"obius function.
  
 Observe that the inner sum is the number of non-trivial integer points within the ellipse $Ax^2+Bxy+Cy^2 = x/g^2$, we diagonalize it to see that its equation can be recasted as
\begin{align}
\label{ellipse}
  \nu_1X^2+\nu_2Y^2=\frac {x}{g^2},
\end{align} 
  where $\nu_{1}\nu_{2}=-\Delta_K/4$. The area of the ellipse given in \eqref{ellipse} then equals
\begin{align*}
  \pi\sqrt{\frac {x}{g^2\nu_1}\cdot \frac {x}{g^2\nu_2}}=\frac {2\pi x}{g^2\sqrt{\Delta_K}}.
\end{align*} 
   It follows that the number of non-trivial integer points within the ellipse $Ax^2+Bxy+Cy^2 = x/g^2$ is 
\begin{align*}
  \ll \frac {x}{g^2\sqrt{\Delta_K}}.
\end{align*}    

  We apply the above and proceed as in the paragraph below \cite[(16)]{McKee95} to see that for any $x \geq 2$,
 \begin{align}
\label{rhopartialsum}
  \sum_{m \leq x}\rho(m) \ll \frac {H^*(\Delta_K)}{\sqrt{-\Delta_K}}x.
\end{align}
  
  It follows from the above and partial summation that we have
\begin{align}
\label{rhoddpartialsum}
  \sum_{m \leq x}\frac {\rho_f(m)}{m} \ll \frac {H^*(\Delta_K)}{\sqrt{-\Delta_K}}\log x.
\end{align}

  We deduce from \eqref{taupartialsum}--\eqref{disc}, \eqref{rhopartialsum} and \eqref{rhoddpartialsum} that for $u \geq 2$, 
\begin{align}
\label{Ttaupartialsumbound}
   \sum_{\substack{\mathfrak n=(a+b\omega_K), b \neq 0, N(\mathfrak n) \leq u }}
\frac{\tau_K(\mathfrak n)}{|b|}  \ll \sum_{1 \leq  b \leq \sqrt{u}}\frac 1{b}\sum_{1 \leq  a \leq \sqrt{u}}\tau(f_K(a;b)) 
  \ll \sqrt{u}\log u\sum_{1 \leq  b \leq \sqrt{u}}\frac {H^*(\Delta_K)}{b^2}.
\end{align}

  Note that we have $H^*(\Delta_K) \ll H(\Delta_K)$.  
We then note from \cite[\S 6, (15)]{Da} that
\begin{align}
\label{Hbound}
  H^*(\Delta_K) \ll H(\Delta_K)  \ll \sqrt{\Delta_K}L(1, \chi_{\Delta_K}) \ll |b|L(1, \chi_{\Delta_K}) \ll |b|,
\end{align}
  where the last estimation above follows by noting that $\chi_{\Delta_K}$ is induced by the character $\chi_{-d}$ so that $L(1, \chi_{\Delta_K}) \ll 1$. 

   We substitute \eqref{Hbound} into \eqref{Ttaupartialsumbound} to see that
\begin{align}
\label{Ttaupartialsumbound1}
   \sum_{\substack{\mathfrak n=(a+b\omega_K), b \neq 0, N(\mathfrak n) \leq u }}
\frac{\tau_K(\mathfrak n)}{|b|} \ll \sqrt{u}(\log u)^2.
\end{align}
  We apply \eqref{Ttaupartialsumbound1} into \eqref{T11est} to see that
\begin{align}
\label{T11bound}
  T_{1,1} \ll & X(\log X)^2.
\end{align}

   We now estimate $T_{1,2}$ by noting that similar to \eqref{sumlambdagamma}, we have for $\mathfrak n=(a+b\omega_K)$ with $a, b \in \mz, b \neq 0$,
\begin{align*}
\begin{split}
 \sum_{X < l\leq 2X}\frac {\lambda_{l}(\mathfrak n)}{\sqrt{N(\mathfrak n)}} \frac{\Gamma(\frac{1}{2}+M_Kl+s)^2}{\Gamma(\frac{1}{2}+M_Kl)^2}\ll
  \frac {|s|}{|b|}X^{2\sigma}.
\end{split}
\end{align*}

   We apply the above to see from \eqref{T112exp} that
\begin{align}
\label{T12exp}
\begin{split}
T_{1,2}
\ll &  X^{-2}\sum_{\substack{\mathfrak n=(a+b\omega_K), b \neq 0, N(\mathfrak n) \leq X^2 }}
\frac {\tau_K(\mathfrak n) N(\mathfrak n)}{|b|} \ll X(\log X)^2,
\end{split}
\end{align}
  where the last estimation above follows from \eqref{Ttaupartialsumbound1} and partial summation.
  
  Note that the above procedure also implies that
\begin{align}
\label{T2exp}
\begin{split}
T_{2}
\ll &  X^{3}\sum_{\substack{\mathfrak n=(a+b\omega_K), b \neq 0, N(\mathfrak n) > X^2 }}
\frac {\tau_K(\mathfrak n) }{|b|N(\mathfrak n)^3} \ll X(\log X)^2. 
\end{split}
\end{align}  
  
   The asymptotical expression given in \eqref{2stmomentasymp} now follows from \eqref{L2dcomp}, \eqref{Mainsum}, \eqref{T1decomp}, \eqref{T11bound}--\eqref{T2exp}. This completes the proof of Theorem \ref{2ndmoment}.

\vspace*{.5cm}

\noindent{\bf Acknowledgments.}  P. G. is supported in part by NSFC grant 12471003.
\bibliography{biblio}

\begin{bibdiv}
\begin{biblist}

\bib{Da}{book}{
      author={Davenport, H.},
       title={{M}ultiplicative {N}umber {T}heory},
     edition={Third edition},
      series={Graduate Texts in Mathematics},
   publisher={Springer-Verlag},
     address={Berlin, etc.},
        date={2000},
      volume={74},
}

\bib{Holm25}{article}{
      author={Holm, K.},
       title={The $1$-level density for zeros of {H}ecke {$L$}-functions of
  imaginary quadratic number fields of class number $1$},
        date={2025},
     journal={J. Th\'eor. Nombres Bordeaux},
      volume={37},
      number={1},
       pages={237\ndash 283},
}

\bib{iwakow}{book}{
      author={Iwaniec, H.},
      author={Kowalski, E.},
       title={{A}nalytic {N}umber {T}heory},
      series={American Mathematical Society Colloquium Publications},
   publisher={American Mathematical Society},
     address={Providence},
        date={2004},
      volume={53},
}

\bib{Marcus18}{book}{
      author={Marcus, D.~A.},
       title={Number fields},
     edition={Second edition},
      series={Universitext},
   publisher={Springer, Cham},
        date={2018},
}

\bib{McKee95}{article}{
      author={McKee, J.},
       title={On the average number of divisors of quadratic polynomials},
        date={1995},
     journal={Math. Proc. Cambridge Philos. Soc.},
      volume={117},
       pages={389–\ndash 392},
}

\bib{MVa1}{book}{
      author={Montgomery, H.~L.},
      author={Vaughan, R.~C.},
       title={{M}ultiplicative number theory. {I}. {C}lassical theory},
      series={Cambridge Studies in Advanced Mathematics},
   publisher={Cambridge University Press},
     address={Cambridge},
        date={2007},
      volume={97},
}

\bib{sound1}{article}{
      author={Soundararajan, K.},
       title={Nonvanishing of quadratic {D}irichlet {$L$}-functions at
  $s=\frac{1}{2}$},
        date={2000},
     journal={Ann. of Math. (2)},
      volume={152},
      number={2},
       pages={447\ndash 488},
}

\bib{Waxman21}{article}{
      author={Waxman, E.},
       title={Lower order terms for the one-level density of a symplectic
  family of {H}ecke {$L$}-functions},
        date={2021},
     journal={J. Number Theory},
      volume={221},
       pages={447\ndash 483},
}

\bib{white22}{article}{
      author={White, D.},
       title={Extreme values of {H}ecke {$L$}-functions to angular characters},
        date={Preprint},
        note={arXiv:2208.02309},
}

\bib{WW27}{book}{
      author={Whittaker, E.~T.},
      author={Watson, G.~N.},
       title={A course of modern analysis},
      series={Cambridge Mathematical Library},
   publisher={Cambridge University Press, Cambridge},
        date={1996},
}

\end{biblist}
\end{bibdiv}
\bibliographystyle{amsxport}

\end{document}